\newcommand{\aut}{\mathop{\rm Aut}\nolimits}
\newcommand{\setrel}{\mathrel{}\middle|\mathrel{}}
\newtheorem{thm}{Theorem}[section]
\newtheorem{lem}[thm]{Lemma}
\newtheorem{cor}[thm]{Corollary}
\newtheorem{remark}[thm]{Remark}
\theoremstyle{definition} 
\newtheorem*{ack*}{Acknowledgment}
\begin{document}

\title{
An upper bound for higher order eigenvalues of symmetric graphs
}
\author{Shinichiro Kobayashi}
\address{Mathematical Institute, Tohoku University, Sendai 980-8578, Japan}
\email{shin-ichiro.kobayashi.p3@dc.tohoku.ac.jp}
\subjclass[2010]{Primary 35P15, 05C50; Secondary 58C40}
\keywords{graph Laplacian; higher-order eigenvalue; regular graph; symmetric graph.}
\date{June 13, 2020.} 
\maketitle

\begin{abstract}
In this paper, we derive an upper bound for higher order eigenvalues of the normalized Laplace operator associated with a symmetric finite graph in terms of lower order eigenvalues. 
\end{abstract}

\section{Introduction}
Let $G$ be a connected, finite, simple and undirected graph of $N$ vertices.
Let $\Delta$ be the normalized Laplace operator assiciated with $G$. The operator $-\Delta$ is identified with a non-negative definite real symmetric matrix of size $N$.
Denote by $\lambda_0\leq\lambda_1\leq \dots\leq\lambda_{N-1}$ all eigenvalues of $\Delta$ counted with multiplicity.
For any connected graph, we have $\lambda_0=0$ and its multiplicity is $1$. All the eigenvalues lie in the interval $[0,2]$.
We consider the following question:
Are there other contraints on the spectrum $\{\lambda_i\}_{i=0}^{N-1}$?
In particular, is $\lambda_{k+1}$ controlled by past eigenvalues, $\lambda_1,\dots,\lambda_{k}$?
This question is a discrete analogue of the so-called Payne-P\'{o}lya-Weinberger's inequality.
For the Dirichlet eigenvalues $0<\lambda_1<\lambda_2\leq \lambda_3\leq \cdots \uparrow \infty$ of the Laplacian on a bounded domain in the Euclid plane, Payne-P\'{o}lya-Weinberger \cite{MR73046, MR84696} proved that
\[
\lambda_{k+1}-\lambda_{k}\leq \frac2k\sum_{i=1}^k\lambda_i.
\]
This result is extended to arbitrary dimension by Thompson \cite{MR257592}.
Later, Hile and Protter \cite{MR578204} and Yang \cite{Yang} proved sharper inequalities.
In particular, Yang \cite{Yang} proved that
\begin{equation}\label{Yangineq}
\sum_{i=1}^k(\lambda_{k+1}-\lambda_i)^2\leq \frac4n\sum_{i=1}^k(\lambda_{k+1}-\lambda_i)\lambda_i.
\end{equation}
Chung and Oden proposed to study of the discrete analogue of their results. 
For the Dirichlet eigenvalues $\{\lambda_i\}_{i\geq 1}$ of the normalized Laplacian on a connected finite subgraph in the integer lattice of rank $n$, Hua, Lin and Su \cite{HLS} proved that
\[
\sum_{i=1}^k(\lambda_{k+1}-\lambda_i)^2(1-\lambda_i)\leq \frac4n\sum_{i=1}^k(\lambda_{k+1}-\lambda_i)\lambda_i.
\]

How about the case of the Laplacian without boundary conditions?
Unlike the case of the Dirichlet boundary condition, $0$ is always an eigenvalue.
For the eigenvalues $\{\lambda_i\}_{i\geq 0}$ with $\lambda_0:=0$ of the Laplacian on a compact Riemannian homogeneous manifold, Cheng and Yang \cite{MR2115463} proved that
\begin{equation}\label{thm: CY}
\sum_{i=0}^k(\lambda_{k+1}-\lambda_i)^2\leq \sum_{i=1}^k(\lambda_{k+1}-\lambda_i)(4\lambda_i+\lambda_1).
\end{equation}
In this paper, we consider a discrete analogue of \eqref{thm: CY}.
More precisely, for a finite symmetric graph, we prove a discrete analogue of \eqref{thm: CY}.
\begin{thm}\label{main thm}
Let $G$ be an symmetric finite graph with $N$ vertices.
Denote by $0=\lambda_0<\lambda_1\leq \lambda_2\leq\cdots\leq\lambda_{N-1}$ the all eigenvalues of the normalized Laplace operator. Then, for any non-zero eigenvalue $\lambda$ of $\Delta$, we have
\[
\sum_{i=0}^k(\lambda_{k+1}-\lambda_i)^2(1-\lambda_i)\leq
\sum_{i=0}^k(\lambda_{k+1}-\lambda_i)(2(2-\lambda)\lambda_i+\lambda).
\]
\end{thm}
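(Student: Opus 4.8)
The plan is to carry the Cheng--Yang method for compact homogeneous manifolds over to graphs; the two ingredients needing new input are a reproducing-kernel computation that uses arc-transitivity, and a careful accounting of the error terms coming from the failure of the Leibniz rule for the combinatorial Laplacian. Since a symmetric graph is regular, say of degree $d$, we have $\Delta = I - \tfrac1d A$ and work with the standard inner product on $\mathbb R^N$; write $\mathcal E(u,v) = \langle \Delta u,v\rangle$ and let $\Gamma(u,v)(x) = \tfrac{1}{2d}\sum_{y\sim x}(u(x)-u(y))(v(x)-v(y))$ be the carr\'e du champ, so that $\Delta(uv) = u\Delta v + v\Delta u - 2\Gamma(u,v)$ and $\langle\Gamma(u,v),1\rangle = \mathcal E(u,v)$. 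Fix a non-zero eigenvalue $\lambda$, put $m = \dim V_\lambda$ and take an orthonormal basis $f_1,\dots,f_m$ of the eigenspace $V_\lambda$. Then $K(x,y) := \sum_\alpha f_\alpha(x)f_\alpha(y)$ is the matrix of the orthogonal projection onto $V_\lambda$; it commutes with the permutation action of $\aut(G)$ (which leaves $V_\lambda$ invariant since it commutes with $\Delta$), hence is constant on $\aut(G)$-orbits of ordered pairs. Vertex-transitivity gives $K(x,x) = m/N =: c_0$ and arc-transitivity gives $K(x,y) = c_1$ for every edge $\{x,y\}$; since $K(\cdot,y)\in V_\lambda$, applying $\Delta$ in the first variable forces $c_1 = (1-\lambda)c_0$. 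In particular $\sum_\alpha(f_\alpha(x)-f_\alpha(y))^2 = 2\lambda c_0$ on each edge, equivalently $\sum_\alpha\Gamma(f_\alpha,f_\alpha) \equiv \lambda c_0$.

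Let $\Delta\phi_i = \lambda_i\phi_i$ with $\{\phi_i\}$ orthonormal, and for $0\le i\le k$ set $b_{\alpha ij} = \langle f_\alpha\phi_i,\phi_j\rangle$, $B_{ij} = \sum_\alpha b_{\alpha ij}^2$, and $\psi_{\alpha i} = f_\alpha\phi_i - \sum_{j=0}^k b_{\alpha ij}\phi_j$, which is orthogonal to $\phi_0,\dots,\phi_k$. From $\Delta(f_\alpha\phi_i) = (\lambda+\lambda_i)f_\alpha\phi_i - 2\Gamma(f_\alpha,\phi_i)$ one has $\mathcal E(f_\alpha\phi_i,f_\alpha\phi_i) = (\lambda+\lambda_i)\|f_\alpha\phi_i\|^2 - 2\langle\Gamma(f_\alpha,\phi_i),f_\alpha\phi_i\rangle$; summing over $\alpha$ and using the kernel values (the sum $\sum_\alpha f_\alpha(x)(f_\alpha(x)-f_\alpha(y))$ equals $K(x,x)-K(x,y) = \lambda c_0$ on edges) gives $\sum_\alpha\|f_\alpha\phi_i\|^2 = c_0$, $\sum_\alpha\langle\Gamma(f_\alpha,\phi_i),f_\alpha\phi_i\rangle = \tfrac12\lambda\lambda_i c_0$, and hence $\sum_\alpha\mathcal E(f_\alpha\phi_i,f_\alpha\phi_i) = c_0(\lambda+\lambda_i-\lambda\lambda_i)$. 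Feeding these into $\mathcal E(\psi_{\alpha i},\psi_{\alpha i}) \ge \lambda_{k+1}\|\psi_{\alpha i}\|^2$, summed over $\alpha$, yields for each $i\le k$
\begin{equation*}
c_0\bigl((\lambda_{k+1}-\lambda_i) - \lambda(1-\lambda_i)\bigr) \;\le\; \sum_{j=0}^k(\lambda_{k+1}-\lambda_j)\,B_{ij}. \tag{A}
\end{equation*}

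The second ingredient is a second-moment bound. From $\langle\Delta(f_\alpha\phi_i),\phi_j\rangle = \lambda_j b_{\alpha ij}$ one reads off $\langle 2\Gamma(f_\alpha,\phi_i),\phi_j\rangle = (\lambda+\lambda_i-\lambda_j)b_{\alpha ij}$, so that $\sum_{j=0}^{N-1}(\lambda+\lambda_i-\lambda_j)^2 B_{ij} = \sum_\alpha\|2\Gamma(f_\alpha,\phi_i)\|^2$. Applying the Cauchy--Schwarz inequality over the neighbours of each vertex gives $\bigl(2\Gamma(f_\alpha,\phi_i)(x)\bigr)^2 \le 4\,\Gamma(f_\alpha,f_\alpha)(x)\,\Gamma(\phi_i,\phi_i)(x)$; summing over $\alpha$, using $\sum_\alpha\Gamma(f_\alpha,f_\alpha)\equiv\lambda c_0$, and then over $x$ yields
\begin{equation*}
\sum_{j=0}^{N-1}(\lambda+\lambda_i-\lambda_j)^2\,B_{ij} \;\le\; 4\lambda c_0\,\mathcal E(\phi_i,\phi_i) \;=\; 4\lambda\lambda_i c_0 .
\end{equation*}
I expect this Cauchy--Schwarz step to be the clean way around the one place where the hypothesis could bite: computing $\sum_\alpha\Gamma(f_\alpha,\phi_i)^2$ exactly would require the kernel $K$ on pairs at distance $2$, which need not be orbit-constant for a merely arc-transitive graph, whereas the bound uses only the diagonal kernel $\sum_\alpha\Gamma(f_\alpha,f_\alpha)$, which is constant.

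It remains to run the Cheng--Yang (Yang-type) recombination of these two facts. One also has the exact companion of (A), namely $\sum_{j=0}^{N-1}(\lambda_{k+1}-\lambda_j)B_{ij} = c_0((\lambda_{k+1}-\lambda_i)-\lambda(1-\lambda_i))$ — because $\sum_{j}\lambda_j B_{ij} = \sum_\alpha\mathcal E(f_\alpha\phi_i,f_\alpha\phi_i)$ — together with $\sum_j B_{ij} = c_0$, the symmetry $B_{ij} = B_{ji}$, and the second-moment bound above; the last three say that the nonnegative weights $(B_{ij})_j$ have prescribed first moment and a controlled variance. Multiplying (A) by $\lambda_{k+1}-\lambda_i\ge 0$, summing over $i\le k$, symmetrising the double sum $\sum_{i,j\le k}(\lambda_{k+1}-\lambda_i)(\lambda_{k+1}-\lambda_j)B_{ij}$ by means of $B_{ij}=B_{ji}$, and bounding it from above by a Cauchy--Schwarz argument driven by the moment identities and the variance bound should, after simplification, collapse to the asserted inequality $\sum_{i=0}^k(\lambda_{k+1}-\lambda_i)^2(1-\lambda_i) \le \sum_{i=0}^k(\lambda_{k+1}-\lambda_i)(2(2-\lambda)\lambda_i+\lambda)$. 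I expect this last step to be the main obstacle: one must push the standard Yang bookkeeping through with the commutator coefficients $p_{\alpha ij} = \langle 2\Gamma(f_\alpha,\phi_i),\phi_j\rangle$ and the relation $p_{\alpha ij}+p_{\alpha ji} = 2\lambda b_{\alpha ij}$, and then verify that the $\lambda$-linear corrections produced by the failure of the Leibniz rule assemble exactly into the factor $1-\lambda_i$ on the left and $2(2-\lambda)$ on the right. A small but genuinely necessary point in the first paragraph is that it is arc-transitivity, not merely vertex-transitivity, that makes $K$ constant along edges and hence $\sum_\alpha\Gamma(f_\alpha,f_\alpha)$ a constant.
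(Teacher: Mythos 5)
Your setup is sound and, in substance, it is the same strategy as the paper: you take an orthonormal basis $f_1,\dots,f_m$ of the $\lambda$-eigenspace as the auxiliary functions, use vertex- and edge-transitivity to make the projection kernel constant on the diagonal and on edges (this is exactly the content of the paper's Lemmas 3.1, 3.2 and Corollary 3.3; your values $c_0=m/N$, $c_1=(1-\lambda)c_0$ and $\sum_\alpha\Gamma(f_\alpha,f_\alpha)\equiv\lambda c_0$ are correct), and you test the min--max characterization of $\lambda_{k+1}$ with $f_\alpha\phi_i$ projected off the first $k+1$ eigenfunctions. Your inequality (A), the identities $\sum_{j=0}^{N-1}B_{ij}=c_0$, $\sum_{j=0}^{N-1}(\lambda_{k+1}-\lambda_j)B_{ij}=c_0\bigl((\lambda_{k+1}-\lambda_i)-\lambda(1-\lambda_i)\bigr)$, the relation $\langle 2\Gamma(f_\alpha,\phi_i),\phi_j\rangle=(\lambda+\lambda_i-\lambda_j)b_{\alpha ij}$, and the bound $\sum_{j}(\lambda+\lambda_i-\lambda_j)^2B_{ij}\le 4\lambda\lambda_i c_0$ are all correct.

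The genuine gap is the final step, which you yourself flag as the expected main obstacle: the Yang-type recombination is asserted to ``collapse to the asserted inequality'' but is not carried out, and the specific route you sketch cannot deliver it. Inequality (A) is only first-order (Hile--Protter type) information, and Yang's quadratic improvement is not a formal consequence of nonnegativity, symmetry, the zeroth and first moments, and the variance bound of the weights $B_{ij}$: for instance, bounding $\sum_{i,j\le k}t_it_jB_{ij}$ (with $t_i=\lambda_{k+1}-\lambda_i$) by Cauchy--Schwarz from these data only returns trivial or PPW-type estimates. What is actually needed is the per-$\alpha$ two-sided treatment of $A_{\alpha i}:=-\langle\psi_{\alpha i},\,2\Gamma(f_\alpha,\phi_i)-\lambda f_\alpha\phi_i\rangle$: on one hand an exact identity expressing $A_{\alpha i}$ as a positive multiple of $\sum_{x\sim y}\phi_i(x)\phi_i(y)(\nabla_{xy}f_\alpha)^2$ plus $\sum_{j\le k}(\lambda_i-\lambda_j)b_{\alpha ij}^2$, and on the other the Cauchy--Schwarz estimate $(\lambda_{k+1}-\lambda_i)A_{\alpha i}\le\|2\Gamma(f_\alpha,\phi_i)-\lambda f_\alpha\phi_i\|^2-\sum_{j\le k}(\lambda_i-\lambda_j)^2b_{\alpha ij}^2$, followed by the cancellation of the cross terms after weighting by $(\lambda_{k+1}-\lambda_i)$ and using $b_{\alpha ij}=b_{\alpha ji}$; this is precisely the paper's Lemma 2.3, and it is the heart of the proof. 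In particular, the factor $(1-\lambda_i)$ multiplying $(\lambda_{k+1}-\lambda_i)^2$ in the target comes from evaluating $\sum_\alpha\sum_{x\sim y}\phi_i(x)\phi_i(y)(\nabla_{xy}f_\alpha)^2$ by edge-constancy, a quantity that never appears in your derivation, and the norm that must be estimated on the right is that of $2\Gamma(f_\alpha,\phi_i)-\lambda f_\alpha\phi_i$ (whose cross term requires the constancy of $\sum_\alpha f_\alpha(x)\nabla_{xy}f_\alpha$, your $K(x,x)-K(x,y)$), not of $2\Gamma(f_\alpha,\phi_i)$ alone. So while your kernel computations repackage the paper's symmetry lemmas correctly, the decisive half of the argument --- deriving the Yang-type lemma and verifying that the constants assemble into $2(2-\lambda)\lambda_i+\lambda$ --- is missing.
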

By using Chebyshev's sum inequality, we obtain an upper bound of $\lambda_{k+1}$ in terms of $\lambda_1,\dots,\lambda_k$.
\begin{thm}\label{main thm 2}
In the same setting as Theorem \ref{main thm}, we have
\[
\lambda_{k+1}\leq \frac{(k+1)\lambda_1 + \sum_{i=1}^k((5-2\lambda_1)\lambda_i-\lambda_i^2)}{\sum_{i=0}^k(1-\lambda_i)}.
\]
\end{thm}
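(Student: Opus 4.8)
The plan is to derive Theorem~\ref{main thm 2} directly from Theorem~\ref{main thm} by choosing the free non-zero eigenvalue there to be $\lambda=\lambda_1$ and then stripping off one factor $\lambda_{k+1}-\lambda_i$ with the help of Chebyshev's sum inequality. Throughout I assume $\sum_{i=0}^{k}(1-\lambda_i)>0$: this is implicit in the statement, since otherwise the right-hand side is not a genuine upper bound, and it turns out to be exactly the hypothesis that makes the Chebyshev step valid.

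First I would set $\lambda=\lambda_1$ in Theorem~\ref{main thm} and move all terms to one side, rewriting the conclusion as
\[
\sum_{i=0}^{k}(\lambda_{k+1}-\lambda_i)\,w_i\le 0,\qquad
w_i:=(\lambda_{k+1}-\lambda_i)(1-\lambda_i)-2(2-\lambda_1)\lambda_i-\lambda_1 .
\]
As a function of $\lambda_i$, the coefficient is the upward parabola $w(t)=t^{2}-(\lambda_{k+1}+5-2\lambda_1)t+(\lambda_{k+1}-\lambda_1)$, so that $w(x)-w(y)=(x-y)\bigl(x+y-(\lambda_{k+1}+5-2\lambda_1)\bigr)$. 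The sequence $(\lambda_{k+1}-\lambda_i)_{i=0}^{k}$ is non-increasing in $i$ because $(\lambda_i)$ is non-decreasing; the key point is to check that $(w_i)_{i=0}^{k}$ is non-increasing as well. For $i<j$ one has $\lambda_i\le\lambda_j$, and by the identity just written $w_i\ge w_j$ as soon as $\lambda_i+\lambda_j\le\lambda_{k+1}+5-2\lambda_1$. This is where $\sum_{i=0}^{k}(1-\lambda_i)>0$ does real work: when $k\ge 2$ it forces $\lambda_1\le\tfrac1k\sum_{i=1}^{k}\lambda_i<\tfrac{k+1}{k}\le\tfrac32$ (the case $k=1$ being immediate), and since every eigenvalue of the normalized Laplacian lies in $[0,2]$ we get $\lambda_i+\lambda_j\le 2\lambda_{k+1}$ together with $2\lambda_{k+1}-(\lambda_{k+1}+5-2\lambda_1)=\lambda_{k+1}+2\lambda_1-5\le 2\lambda_1-3<0$.

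Since the two sequences are monotone in the same direction, Chebyshev's sum inequality gives
\[
\sum_{i=0}^{k}(\lambda_{k+1}-\lambda_i)\,w_i\ \ge\ \frac{1}{k+1}\Bigl(\sum_{i=0}^{k}(\lambda_{k+1}-\lambda_i)\Bigr)\Bigl(\sum_{i=0}^{k}w_i\Bigr).
\]
The first sum on the right is strictly positive---its $i=0$ term alone equals $\lambda_{k+1}>0$---while the left-hand side is $\le 0$ by the reformulation of Theorem~\ref{main thm} above, so $\sum_{i=0}^{k}w_i\le 0$. Expanding this sum with $\lambda_0=0$ and rearranging, the inequality $\sum_{i=0}^{k}w_i\le 0$ becomes
\[
\lambda_{k+1}\sum_{i=0}^{k}(1-\lambda_i)\ \le\ (k+1)\lambda_1+\sum_{i=1}^{k}\bigl((5-2\lambda_1)\lambda_i-\lambda_i^{2}\bigr),
\]
and dividing through by $\sum_{i=0}^{k}(1-\lambda_i)>0$ gives exactly the estimate of Theorem~\ref{main thm 2}.

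The only non-routine ingredient is the monotonicity of $(w_i)$; the rest is the algebraic rewriting of Theorem~\ref{main thm} plus a single application of Chebyshev. I expect that monotonicity check to be the real obstacle, because it is precisely where the bound $\lambda_{k+1}\le 2$ peculiar to the normalized Laplacian is indispensable and where the hidden assumption $\sum_{i=0}^{k}(1-\lambda_i)>0$ is genuinely used; by contrast, the continuous homogeneous-space inequality \eqref{thm: CY} of \cite{MR2115463} has no such a priori spectral bound, and there plain Cauchy-Schwarz takes the place of Chebyshev.
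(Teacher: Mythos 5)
Your overall route is the paper's route: put $\lambda=\lambda_1$ in Theorem \ref{main thm}, rewrite the conclusion as $\sum_{i=0}^k(\lambda_{k+1}-\lambda_i)w_i\le 0$ with exactly the same quadratic $w(t)=t^2-(\lambda_{k+1}+5-2\lambda_1)t+(\lambda_{k+1}-\lambda_1)$, check that both $(\lambda_{k+1}-\lambda_i)_i$ and $(w_i)_i$ are non-increasing, apply Chebyshev's sum inequality (Lemma \ref{Chebyineq}) together with $\sum_{i=0}^k(\lambda_{k+1}-\lambda_i)>0$ to get $\sum_{i=0}^k w_i\le 0$, and then rearrange and divide. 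The algebra and the Chebyshev step are correct.

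The genuine gap is your treatment of $\sum_{i=0}^{k}(1-\lambda_i)>0$. You declare it an assumption, ``implicit in the statement,'' but Theorem \ref{main thm 2} is asserted unconditionally (for every $k$ with $0\le k\le N-2$), so a proof must establish this positivity, not posit it; as written you only prove a conditional statement. This is not a vacuous point, since individual eigenvalues can exceed $1$, and your argument uses the positivity twice: to get $\lambda_1<3/2$ in the monotonicity check for $k\ge 2$, and, indispensably, to divide by the denominator at the end. The paper closes exactly this hole with Lemma \ref{lem: admatrix}: since the diagonal of $D^{-1}A$ vanishes, $\sum_{i=0}^{N-1}(1-\lambda_i)=\frac1d\operatorname{tr}A=0$, and because the numbers $1-\lambda_i$ are arranged in non-increasing order, every proper initial partial sum is strictly positive (equivalently, the partial sums $\sum_{i=0}^k\nu_i$ of the adjacency eigenvalues in decreasing order are $\ge 0$, with equality only at $k=N-1$). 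Once you add this short trace argument your proof is complete; note also that the monotonicity of $(w_i)$ can be obtained without the assumption, as the paper does, by bounding $\lambda_1$ directly (Lemma \ref{lem: upperbound} gives $\lambda_1\le 1$ off complete graphs, and complete graphs have $\lambda_1=N/(N-1)\le 3/2$ for $N\ge3$), so that the parabola's vertex lies to the right of $[0,2]\ni\lambda_i$ --- only the final division truly requires Lemma \ref{lem: admatrix}.
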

Let $\mu_1:=\lambda_1$ and $m$ be the multiplicity of $\mu_1$. If $G$ is not a complete graph, then we can consider $\mu_2:=\lambda_{m+1}$, i.e., the second smallest positive eigenvalue.
We have a upper bound for the ratio $\mu_2/\mu_1$ in terms of the multiplicity of $\mu_1$.
\begin{cor}\label{main cor}
In the same setting as Theorem \ref{main thm}, let $m$ be the multiplicity of $\mu_1$ and put $\mu_2:=\lambda_{m+1}$. Then, we have
\[
\frac{\mu_2}{\mu_1}\leq 3m+1.
\]
\end{cor}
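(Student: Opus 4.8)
The plan is to apply Theorem~\ref{main thm 2} with $k=m$. Since $m$ is the multiplicity of $\mu_1=\lambda_1$ we have $\lambda_1=\dots=\lambda_m=\mu_1$, while $\lambda_0=0$ and $\lambda_{m+1}=\mu_2$. Substituting these values, the numerator on the right-hand side of Theorem~\ref{main thm 2} becomes
\[
(m+1)\mu_1+m\bigl((5-2\mu_1)\mu_1-\mu_1^2\bigr)=\mu_1\bigl(6m+1-3m\mu_1\bigr),
\]
and the denominator becomes $\sum_{i=0}^m(1-\lambda_i)=1+m(1-\mu_1)$. Hence Theorem~\ref{main thm 2} gives
\[
\frac{\mu_2}{\mu_1}\le\frac{6m+1-3m\mu_1}{1+m(1-\mu_1)}.
\]

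Next I would bound the right-hand side by $3m+1$. For this I first record that $\mu_1=\lambda_1\le 1$. Indeed, a symmetric graph is vertex-transitive, hence regular of some degree $d$, and its adjacency eigenvalues are $d(1-\lambda_i)$; since $G$ is not complete there are two non-adjacent vertices $u,v$, and the principal submatrix of the adjacency matrix indexed by $\{u,v\}$ is the $2\times 2$ zero matrix, so Cauchy's interlacing theorem forces the second largest adjacency eigenvalue $d(1-\lambda_1)$ to be non-negative, that is $\lambda_1\le 1$. In particular the denominator $1+m(1-\mu_1)$ is at least $1$, so clearing it is legitimate and the inequality $\frac{6m+1-3m\mu_1}{1+m(1-\mu_1)}\le 3m+1$ reduces to
\[
(3m+1)\bigl(1+m(1-\mu_1)\bigr)-\bigl(6m+1-3m\mu_1\bigr)=m(3m-2)(1-\mu_1)\ge 0,
\]
which holds because $m\ge 1$ gives $3m-2\ge 1>0$ and $\mu_1\le 1$ gives $1-\mu_1\ge 0$. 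Combining the two displayed bounds yields $\mu_2/\mu_1\le 3m+1$.

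I do not anticipate a real obstacle: the core of the argument is an algebraic simplification once Theorem~\ref{main thm 2} is available. The only external input is the estimate $\lambda_1\le 1$ for connected non-complete graphs, which could alternatively be deduced from the classical structure theorem for graphs with a single positive adjacency eigenvalue; the one subtlety to watch is that the denominator $1+m(1-\mu_1)$ be positive, which is exactly what $\lambda_1\le 1$ guarantees, so that multiplying through preserves the direction of the inequality.
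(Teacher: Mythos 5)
Your proof is correct and follows essentially the same route as the paper: apply Theorem~\ref{main thm 2} with $k=m$, simplify to $\mu_2/\mu_1\le(6m+1-3m\mu_1)/(m+1-m\mu_1)$, and conclude using $\mu_1\le 1$. The only cosmetic differences are that you reprove $\lambda_1\le 1$ via Cauchy interlacing instead of citing Lemma~\ref{lem: upperbound}, and you verify the final bound by the direct algebraic identity $m(3m-2)(1-\mu_1)\ge 0$ rather than by the paper's observation that $x\mapsto(6m+1-3mx)/(m+1-mx)$ is increasing, which amounts to the same computation.
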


\begin{ack*}
The author would like to thank Professor Takashi Shioya for helpful comments.
\end{ack*}

\section{Preliminaries}
In this section, unless otherwise stated, we assume that all graphs are connected, finite, simple and undirected.
We recall some basic facts on the theory of eigenvalues of a regular graph.
Let $G=(V,E)$ be a $d$-regular graph, $d\geq 1$, and put $N:=\#V$.
If two vertices $x,y\in V$ are adjacent, then we denote this situation by $x\sim y$. Note that since $G$ is undirected, $x\sim y$ if and only if $y\sim x$. 
The \emph{normalized Laplace operator} $\Delta$ acting on the space $C(V)$ of functions on $V$ is defined by 
\[
\Delta u(x):=\frac{1}{d}\sum_{y\sim x}\left(u(y)-u(x)\right),\, u\in C(V), x\in V.
\]
The normalized Laplace operator is identified with the real-symmetric matrix $D^{-1}A-I$, where $D$ is the scalar matrix with diagonal entries $d$, $A$ is the adjacency matrix of $G$ and $I$ is the identity matrix.
A complex number $\lambda$ is called an \emph{eigenvalue} of $\Delta$ if there exists $u\in C(V)\setminus \{0\}$ such that $\Delta u+\lambda u=0$ holds. In this case, the function $u$ is called an \emph{eigenfunction} with eigenvalue $\lambda$. For an eigenvalue $\lambda$ of $\Delta$, we denote by $W_{\lambda}$ the space of all functions $u\in C(V)$
satisfying $\Delta u+\lambda u=0$ and we call the dimension of $W_{\lambda}$ \emph{multiplicity} of $\lambda$.
Let us denote the eigenvalues of $\Delta$ by $\lambda_0\leq\lambda_1\leq \lambda_2\leq \dots\leq \lambda_{N-1}$, counted with multiplicity.
We define a inner product $\langle\cdot,\cdot\rangle$ on $C(V)$ by
\[
\langle u,v\rangle:=\sum_{x\in V}u(x)v(x)d.
\]
We denote by $\|\cdot\|$ the norm induced by the inner product $\langle \cdot,\cdot\rangle$.
We list up some elementary facts on eigenvalues and eigenfunctions without proofs.
\begin{itemize}
\item $0$ is an eigenvalue of multiplicity $1$ and constant functions are eigenfunctions with eigenvalue $0$.
\item All eigenvalues lie in the interval $[0,2]\subset \mathbb{R}$.
\item There exists a orthonormal basis $\{u_{i}\}_{i=0}^{N-1}$ of $C(V)$ such that each function $u_{i}$ is an eigenfunction with eigenvalue $\lambda_i.$
\end{itemize}
By the min-max formula, each eigenvalue $\lambda_k$ has a variational characterization:
\[
\lambda_k = \inf\left\{\frac{\sum_{x\sim y}(u(y)-u(x))^2}{2d\sum_{V}u^2}\setrel u\neq 0, \langle u,u_i\rangle=0,\, i=0,\dots, k-1\right\},
\]
where the symbol $\sum_{x\sim y}$ means the summation over all unordered pairs $(x,y)$ such that $x\sim y$.
In particular, we have
\begin{equation}\label{spectral gap}
\lambda_1 = \inf\left\{\frac{\sum_{x\sim y}(u(y)-u(x))^2}{2d\sum_{V}u^2}\setrel u\neq 0, \sum_Vu=0 \right\}.
\end{equation}
We shall derive a general upper bound for $\lambda_1$.
\begin{lem}\label{lem: upperbound}
For any regular graph $G$ but a complete graph, we have
\[
\lambda_1\leq 1.
\]
\end{lem}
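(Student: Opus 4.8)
\emph{Proof proposal.} The plan is to exhibit an explicit admissible test function in the variational characterization \eqref{spectral gap} of $\lambda_1$ and to check that its Rayleigh quotient does not exceed $1$. The hypothesis that $G$ is not complete is used exactly once: since $G$ is a simple graph that is not complete, there are two distinct vertices $p,q\in V$ with $p\not\sim q$. I would then take
\[
u(p):=1,\qquad u(q):=-1,\qquad u(x):=0\quad(x\in V\setminus\{p,q\}),
\]
so that $u\in C(V)\setminus\{0\}$ and $\sum_V u=1+(-1)=0$; hence $u$ is admissible in \eqref{spectral gap}.

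Next I would evaluate the quotient in \eqref{spectral gap} for this $u$. In the numerator $\sum_{x\sim y}(u(y)-u(x))^2$ an edge contributes nothing unless one of its endpoints lies in $\{p,q\}$. Because $p\not\sim q$, no edge joins $p$ to $q$, so the $d$ edges incident to $p$ and the $d$ edges incident to $q$ form two disjoint families, and the far endpoint of each of these edges carries the value $0$; thus each such edge contributes $(\pm 1-0)^2=1$ and every other edge contributes $0$. Consequently the numerator equals $2d$, while $\sum_V u^2=2$. Substituting these values into \eqref{spectral gap} yields $\lambda_1\le 1$, which is the assertion.

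I do not expect a genuine obstacle here: the argument is a one-line Rayleigh-quotient estimate. The two points deserving a little care are that $u$ is indeed orthogonal to the constant eigenfunction — immediate from $\sum_V u=0$ together with the fact that $G$ is $d$-regular, so the weight in $\langle\cdot,\cdot\rangle$ is constant — and, crucially, that the non-adjacency of $p$ and $q$ prevents any single edge from being incident to both of them, which is what keeps the two families of boundary edges disjoint and gives the clean count. It is worth remarking that this estimate need not be sharp for a given graph, but sharpness plays no role in what follows.
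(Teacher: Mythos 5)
Your proposal is correct and takes essentially the same route as the paper: the identical test function supported on two non-adjacent vertices, checked to satisfy $\sum_V u=0$, with the Rayleigh quotient in \eqref{spectral gap} evaluated directly. One bookkeeping remark: with your counts (numerator $2d$ over unordered edges and $\sum_V u^2=2$) the formula \eqref{spectral gap} as literally printed would give $1/2$ rather than $1$; the value $1$ you state is the correct Rayleigh quotient (as the example of a regular complete bipartite graph, where $\lambda_1=1$, shows), the discrepancy coming from a factor-of-two normalization inconsistency in the displayed formula (unordered sum versus the $2d$ in the denominator), not from a flaw in your argument.
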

\begin{proof}
Since $G$ is not complete, there exist two vertices $x_0,y_0\in V$ such that $x_0\not \sim y_0$. We define a function $u\in C(V)$ by
\[
u(x):=
\begin{cases}
1 & \text{if}\ x=x_0, \\
-1 & \text{if}\ x=y_0, \\
0 & \text{otherwise}.
\end{cases}
\]
Clearly, the function $u$ satisfies $\sum_Vu=0$. From \eqref{spectral gap}, we have
\[
\lambda_1\leq \frac{\sum_{x\sim y}(u(y)-u(x))^2}{2d\sum_{V}u^2}=1.
\]
\end{proof}
\begin{remark}
If $G$ is the complete graph of degree $d$, then $\lambda_1=1+1/d$.
\end{remark}
Let $\Gamma\colon C(V)\times C(V)\to C(V)$ be the \emph{carr\'{e} du champ operator} associated to $\Delta$, i.e., for $u,v\in C(V)$,
\[
\Gamma(u,v):=\frac12\left(\Delta(uv)-(\Delta u)v-u\Delta v\right).
\]
For two vertices $x,y\in V$ with $x\sim y$, we define the \emph{difference operator} $\nabla_{xy}\colon C(V)\to C(V)$
by
\[
\nabla_{xy}u:=u(y)-u(x),\, u\in C(V).
\]
By a simple calculation, we have
\[
\Gamma(u,v)(x)=\frac{1}{2d}\sum_{y\sim x}(\nabla_{xy}u)(\nabla_{xy}v),\, x\in V.
\]
The carr\'{e} du champ $\Gamma(u,v)$ is an analogy of $\langle \nabla u,\nabla v\rangle$ in the context of Riemannian geometry, where $\nabla$ is the gradient operator.
We list up some identities for $\Gamma$.
\begin{lem}\label{lem: calculus}
Let $u,v, v_1,v_2\in C(V)$.
\begin{enumerate}
\item $\langle u, \Delta v\rangle = -\sum_{V}\Gamma(u,v)d.$
\item For any $x\in V$, we have
\begin{align*}
\Gamma(u,v_1v_2)(x) = \Gamma(u, v_1)&v_2(x) + \Gamma(u, v_2)v_1(x) \\
&+\frac{1}{2d}\sum_{y\sim x}(\nabla_{xy}u)(\nabla_{xy} v_1)(\nabla_{xy}v_2).
\end{align*}
In particular,
\[
\sum_{V}\Gamma(u, v_1v_2) =\sum_{V} (\Gamma(u, v_1)v_2 + \Gamma(u, v_2)v_1).
\]
\end{enumerate}
\end{lem}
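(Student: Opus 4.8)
The plan is to reduce both assertions to elementary bookkeeping of sums over oriented edges, the only structural fact being the antisymmetry $\nabla_{yx}w=-\nabla_{xy}w$ for every $w\in C(V)$.

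For part (1) I would begin by unwinding the definitions. Since the weight $d$ appearing in $\langle\cdot,\cdot\rangle$ cancels the factor $1/d$ in $\Delta$, one obtains $\langle u,\Delta v\rangle=\sum_{x\in V}\sum_{y\sim x}u(x)\bigl(v(y)-v(x)\bigr)$, a sum over ordered adjacent pairs $(x,y)$. Relabelling $(x,y)\mapsto(y,x)$ in this sum, and averaging the original expression with the relabelled one, turns it into $-\tfrac12\sum_{x\in V}\sum_{y\sim x}(\nabla_{xy}u)(\nabla_{xy}v)$. Comparing with $\Gamma(u,v)(x)=\tfrac1{2d}\sum_{y\sim x}(\nabla_{xy}u)(\nabla_{xy}v)$ and multiplying by $d$ before summing over $V$ yields exactly $-\sum_V\Gamma(u,v)d$.

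For part (2) the key algebraic input is the three-term discrete Leibniz rule
\[
\nabla_{xy}(v_1v_2)=(\nabla_{xy}v_1)\,v_2(x)+v_1(x)\,(\nabla_{xy}v_2)+(\nabla_{xy}v_1)(\nabla_{xy}v_2),
\]
proved by expanding the right-hand side and cancelling. Inserting this into $\Gamma(u,v_1v_2)(x)=\tfrac1{2d}\sum_{y\sim x}(\nabla_{xy}u)\,\nabla_{xy}(v_1v_2)$ and pulling the $y$-independent factors $v_2(x)$ and $v_1(x)$ out of the inner sum, the first two terms become precisely $\Gamma(u,v_1)(x)v_2(x)$ and $\Gamma(u,v_2)(x)v_1(x)$, while the third is the claimed cubic remainder. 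For the ``in particular'' statement I would sum the pointwise identity over $x\in V$; the only term to dispose of is $\sum_{x\in V}\sum_{y\sim x}(\nabla_{xy}u)(\nabla_{xy}v_1)(\nabla_{xy}v_2)$, and since swapping $(x,y)$ multiplies the summand by $(-1)^3=-1$ while fixing the index set of ordered adjacent pairs, this double sum equals its own negative and hence vanishes.

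There is no genuine obstacle: the whole argument is routine. The one place that rewards a little care is the asymmetric placement of the base point in the Leibniz rule --- writing $(\nabla_{xy}v_1)v_2(x)$ rather than $(\nabla_{xy}v_1)v_2(y)$ --- since this is exactly what makes the first two terms collapse into $\Gamma(u,v_1)v_2$ and $\Gamma(u,v_2)v_1$ with the correct evaluations at $x$; the alternative convention would reroute part of the quadratic cross-term into the cubic remainder and obscure the identity.
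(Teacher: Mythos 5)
Your proof is correct; the paper in fact states this lemma without proof, and your verification --- using the edge-sum formula $\Gamma(u,v)(x)=\tfrac1{2d}\sum_{y\sim x}(\nabla_{xy}u)(\nabla_{xy}v)$ already derived in the paper, the exact three-term Leibniz identity for $\nabla_{xy}(v_1v_2)$, and the vanishing of the cubic term under the swap $(x,y)\mapsto(y,x)$ --- is precisely the routine computation the author leaves to the reader. Your closing remark about basing the Leibniz rule at $x$ is also the right point of care, since it is what lets $v_1(x)$, $v_2(x)$ factor out of the sum over $y$.
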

Making use of the min-max formula and appropriate trial functions, we have the following lemma.
\begin{lem}\label{CYineq}
Let $k\geq 1$ be an integer.
For any function $h\in C(V)$, we have
\[
\frac12\sum_{i=0}^{k}(\lambda_{k+1}-\lambda_i)^2\Phi_i(h)\leq \sum_{i=0}^{k}(\lambda_{k+1}-\lambda_i)\|2\Gamma(h,u_i) + u_i\Delta h\|^2,
\]
where $\Phi_i(h)=\sum_{x\sim y}u_i(x)u_i(y)(\nabla_{xy}h)^2$.
\end{lem}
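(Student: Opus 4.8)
The plan is to adapt the Cheng--Yang argument \cite{MR2115463} to graphs by feeding carefully chosen trial functions into the min-max formula. Fix $h\in C(V)$, and for $i\in\{0,\dots,k\}$ set
\[
g_i:=2\Gamma(h,u_i)+u_i\Delta h .
\]
From $\Delta(hu_i)=(\Delta h)u_i+h\Delta u_i+2\Gamma(h,u_i)$ together with $\Delta u_i=-\lambda_i u_i$ one sees that $g_i=(\Delta+\lambda_i)(hu_i)$. Take as trial functions
\[
\phi_i:=hu_i-\sum_{j=0}^{k}a_{ij}u_j,\qquad a_{ij}:=\langle hu_i,u_j\rangle ,
\]
so $\phi_i$ is orthogonal to $u_0,\dots,u_k$. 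Since $(\Delta+\lambda_i)u_j=(\lambda_i-\lambda_j)u_j$, we get $-\Delta\phi_i-\lambda_i\phi_i=-g_i+\sum_{j=0}^{k}(\lambda_i-\lambda_j)a_{ij}u_j$, and the variational characterization of $\lambda_{k+1}$ applied to $\phi_i$ (orthogonal to the first $k+1$ eigenfunctions), together with the orthogonality $\phi_i\perp u_j$ for $j\le k$, yields $(\lambda_{k+1}-\lambda_i)\|\phi_i\|^2\le-\langle g_i,\phi_i\rangle$.

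The computational heart is the identity
\[
\langle g_i,hu_i\rangle=-\Phi_i(h) .
\]
I would prove it by expanding $\langle 2\Gamma(h,u_i),hu_i\rangle$ and $\langle u_i\Delta h,hu_i\rangle$ as sums over ordered edges $(x,y)$ with $x\sim y$; they combine, using $(\nabla_{xy}u_i)+u_i(x)=u_i(y)$, into $\sum_{(x,y)}h(x)(\nabla_{xy}h)u_i(x)u_i(y)$, and symmetrizing in $(x,y)\leftrightarrow(y,x)$ turns this into $-\Phi_i(h)$. Used alongside it is the elementary identity $\langle g_i,u_j\rangle=\langle hu_i,(\Delta+\lambda_i)u_j\rangle=(\lambda_i-\lambda_j)a_{ij}$ (self-adjointness of $\Delta$, Lemma \ref{lem: calculus}(1)), which gives both $-\langle g_i,\phi_i\rangle=\Phi_i(h)+\sum_{j=0}^{k}(\lambda_i-\lambda_j)a_{ij}^2$ and, writing $Q$ for the orthogonal projection onto the span of $u_0,\dots,u_k$, the Pythagoras relation $\|g_i-Qg_i\|^2=\|g_i\|^2-\sum_{j=0}^{k}(\lambda_i-\lambda_j)^2a_{ij}^2$.

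Now the standard Yang manipulation. From $-\langle g_i,\phi_i\rangle=-\langle g_i-Qg_i,\phi_i\rangle\le\|g_i-Qg_i\|\,\|\phi_i\|$ and the bound above, $(\lambda_{k+1}-\lambda_i)\|\phi_i\|\le\|g_i-Qg_i\|$; substituting this back into $(\lambda_{k+1}-\lambda_i)^2\bigl(-\langle g_i,\phi_i\rangle\bigr)\le(\lambda_{k+1}-\lambda_i)^2\|g_i-Qg_i\|\,\|\phi_i\|$ gives $(\lambda_{k+1}-\lambda_i)^2\bigl(-\langle g_i,\phi_i\rangle\bigr)\le(\lambda_{k+1}-\lambda_i)\|g_i-Qg_i\|^2$. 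Using $\Phi_i(h)=-\langle g_i,\phi_i\rangle-\sum_j(\lambda_i-\lambda_j)a_{ij}^2$ and the two identities for $-\langle g_i,\phi_i\rangle$ and $\|g_i-Qg_i\|^2$, one obtains for each $i$
\[
(\lambda_{k+1}-\lambda_i)^2\Phi_i(h)\le(\lambda_{k+1}-\lambda_i)\|g_i\|^2-\sum_{j=0}^{k}(\lambda_{k+1}-\lambda_i)(\lambda_{k+1}-\lambda_j)(\lambda_i-\lambda_j)a_{ij}^2 .
\]
Summing over $i=0,\dots,k$ and using $a_{ij}=a_{ji}$, the double sum is antisymmetric under $i\leftrightarrow j$ and hence vanishes, leaving $\sum_i(\lambda_{k+1}-\lambda_i)^2\Phi_i(h)\le\sum_i(\lambda_{k+1}-\lambda_i)\|g_i\|^2$, which is the assertion of Lemma \ref{CYineq} (indeed the argument gives the inequality without the factor $\frac12$).

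The step I expect to be the main obstacle is the identity $\langle g_i,hu_i\rangle=-\Phi_i(h)$: it is what brings in $\Phi_i(h)$ with its $u_i(x)u_i(y)$ weighting (the right discrete surrogate for $\int|\nabla h|^2u_i^2$), and the ordered-edge bookkeeping and symmetrization must be carried out carefully; the Leibniz rules in Lemma \ref{lem: calculus}(2) help organize it but carry pointwise correction terms, so a direct edge computation seems cleaner. A secondary point of care is arranging the Cauchy--Schwarz steps so that the leftover $a_{ij}$-terms assemble into an antisymmetric, hence vanishing, double sum rather than merely a nonnegative one.
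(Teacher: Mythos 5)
Your proposal is correct and follows essentially the same route as the paper: the same trial functions $\varphi_i = hu_i - \sum_{j\le k}\langle hu_i,u_j\rangle u_j$, the same identities $\langle g_i,u_j\rangle=(\lambda_i-\lambda_j)a_{ij}$ and $-\langle g_i,hu_i\rangle=\Phi_i(h)$ (which the paper derives via the carr\'e du champ rules of Lemma \ref{lem: calculus} rather than your direct ordered-edge expansion), and the same Cauchy--Schwarz step followed by the antisymmetric cancellation of the $a_{ij}$-terms. The sharper inequality without the factor $\frac12$ that you obtain matches the paper's computation once one accounts for its fluctuating ordered/unordered convention for $\sum_{x\sim y}$, and in any case it implies the stated form because the right-hand side is nonnegative.
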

\begin{proof}
Let $h\in C(V)$. For $i=0,\dots,k$, define $\varphi_i\in C(V)$ as the orthogonal projection of $hu_i$ to the subspace spanned by $\{u_{k+1},\dots,u_{N-1}\}$, i.e., 
\[
\varphi_i:=hu_i-\sum_{j=0}^ka_{ij}u_j, \, a_{ij}:=\langle hu_i,u_j\rangle.
\]
Clearly the function $\varphi_i$ is perpendicular to $u_0,\dots,u_{k}$. The min-max formula yields
\begin{equation}\label{minmax}
\lambda_{k+1}\|\varphi_i\|^2 \leq \frac12\sum_{x\sim y}(\nabla_{xy}\varphi_i)^2 = \sum_{V}\Gamma(\varphi_i,\varphi_i)d.
\end{equation}
From (1) in Lemma \ref{lem: calculus} and the fact that $\langle\varphi_i,u_j\rangle=0$ for $j=0,\dots,k$, we have
\begin{align*} 
\sum_V\Gamma(\varphi_i,\varphi_i)d&= -\langle \varphi_i,\Delta \varphi_i\rangle \\
&=-\langle\varphi_i, 2\Gamma(h,u_i)+u_i\Delta h-\lambda_iu_ih+\sum_{j=0}^ka_{ij}\lambda_ju_j\rangle \\
&=-\langle\varphi_i, 2\Gamma(h,u_i)+u_i\Delta h-\lambda_iu_ih\rangle\\
&= -\langle\varphi_i,2\Gamma(h,u_i)+u_i\Delta h\rangle + \lambda_i \|\varphi_i\|^2.
\end{align*}
From \eqref{minmax}, we obtain
\begin{equation}\label{first}
(\lambda_{k+1}-\lambda_i)\|\varphi_i\|^2 \leq -\langle\varphi_i,2\Gamma(h,u_i)+u_i\Delta h\rangle.
\end{equation}
Let $A_i$ be the right hand side of (\ref{first}).
We estimate $A_i$ in two ways.
First, we claim that
\begin{equation}\label{second}
A_i = \frac12\sum_{x\sim y}u_i(x)u_i(y)(\nabla_{xy}h)^2 + \sum_{j=0}^{k} (\lambda_i-\lambda_j)a_{ij}^2.
\end{equation}
To see \eqref{second}, we use Lemma \ref{lem: calculus}. By the definition of $\varphi_i$,
\begin{align*}
A_i = \sum_{j=0}^{k}a_{ij}\langle u_j, u_i\Delta h +2\Gamma(h, u_i)\rangle - d\sum_{V}(hu_i^2\Delta h +2hu_i\Gamma(h,u_i)).
\end{align*}
The first term is equal to $\sum_{j=0}^k(\lambda_i-\lambda_j)a_{ij}^2$. Indeed, by the definition of $\Gamma(h,u_i)$ and Lemma \ref{lem: calculus}, we have
\begin{align}\label{bij}
\nonumber \langle u_j, u_i\Delta h +2\Gamma(h, u_i)\rangle &= \langle u_j, \Delta(hu_i) + \lambda_ihu_i \rangle\\
&= \nonumber \lambda_ia_{ij} - \langle \lambda_ju_j,hu_i\rangle \\
&= (\lambda_i-\lambda_j)a_{ij}.
\end{align}
The second term is equal to $\sum_{x\sim y}u_i(x)u_i(y)(\nabla_{xy}h)^2/2$. Indeed,
\begin{align*}
-\langle hu_i^2, \Delta h\rangle &= \sum_V\Gamma(hu_i^2, h)d \\
&= \sum_{V}(h\Gamma(u_i^2,h) + u_i^2 \Gamma(h,h))d \\
&= \frac{1}{2}\sum_{x\sim y}\left((\nabla_{xy}u_i)^2h(x)(\nabla_{xy}h)+u_i(x)^2(\nabla_{xy}h)^2\right) \\
&+ \sum_{V}2hu_i\Gamma(h,u_i)d \\
&= \frac{1}{2}\sum_{x\sim y}u_i(x)u_i(y)(\nabla_{xy}h)^2+\sum_{V}2hu_i\Gamma(h,u_i)d.
\end{align*}

Second, we claim that 
\begin{equation}\label{third}
(\lambda_{k+1}-\lambda_i)A_i\leq \|u_i\Delta h + 2\Gamma(u_i,h)\|^2 - \sum_{j=0}^{k}(\lambda_i-\lambda_j)^2a_{ij}^2.
\end{equation}
From the definition of $A_i$, we have
\[
A_i=-\langle \varphi_i, 2\Gamma(h,u_i)+u_i\Delta h -\sum_{j=0}^k(\lambda_i-\lambda_j)a_{ij}u_j\rangle.
\]
Applying the Cauchy-Schwartz inequality to the definition of $A_i$ and taking account into \eqref{first} and \eqref{bij}, we have
\[
(\lambda_{k+1}-\lambda_i)A_i^2 \leq A_i(\|2\Gamma(h,u_i)+u_i\Delta h\|^2-\sum_{j=0}^k(\lambda_i-\lambda_j)^2a_{ij}^2).
\]
From \eqref{second} and \eqref{third}, we obtain
\begin{align*}
&\frac12\sum_{i=0}^k(\lambda_{k+1}-\lambda_i)^2\sum_{x\sim y}u_i(x)u_i(y)(\nabla_{xy}h)^2 + \sum_{i,j=0}^k(\lambda_{k+1}-\lambda_i)^2(\lambda_i-\lambda_j)a_{ij}^2\\
&\leq \sum_{i=0}^k(\lambda_{k+1}-\lambda_i)\|2\Gamma(h,u_i)+u_i\Delta h\|^2-\sum_{i,j=0}^k(\lambda_{k+1}-\lambda_i)(\lambda_i-\lambda_j)^2a_{ij}^2.
\end{align*}
Since $\sum_{i,j=0}^k(\lambda_{k+1}-\lambda_i)^2(\lambda_i-\lambda_j)a_{ij}^2=-\sum_{i,j=0}^k(\lambda_{k+1}-\lambda_i)(\lambda_i-\lambda_j)^2a_{ij}^2$, we complete the proof.
\end{proof}

\section{proof of main theorem}
In this section, we give a proof of Theorem 1.1. In order to complete the proof, we use some symmetries of eigenfunctions on a symmetric graph.
\subsection{Symmetries of eigenfunctions on a symmetric graph}
We derive some properties of  eigenfunctions on a symmetric graph.
In particular, Lemma \ref{lem: const2} is peculiar to symmetric graphs.
A graph $G=(V,E)$ is said to be \emph{symmetric} if for any two edges $(x,y), (x',y')\in E$, there exists an automorphism $\gamma$ of $G$ such that $x'=\gamma x$ and $y'=\gamma y$ hold.
We denote by $\aut (G)$ the group of automorphisms of $G$.
Note that symmetric graphs are vertex-transitive, i.e., $\aut (G)$ acts transitively on $V$, and thus regular.
We say that a vector subspace $W$ of $C(V)$ is \emph{invariant} if for any $u\in W$ and $\gamma\in \aut (G)$, $\gamma u\in W$, where $\gamma u$ is defined by $\gamma u(x):=u(\gamma x), x\in V$. 
\begin{lem}\label{lem: const1}
Let $G=(V,E)$ be a vertex-transitive graph.
Let $W$ be an invariant vector subspace of $C(V)$ of dimension $m$ and let $\{ u_{\alpha} \}_{\alpha=1}^{m}$
be an orthonormal basis of $W$. Then, the function $\lvert u_1\rvert^2+\dots+\lvert u_m\rvert^2$ is constant
and its value is $m/d\#V$.
\end{lem}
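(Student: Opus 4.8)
The plan is to observe that the function $f:=\lvert u_1\rvert^2+\dots+\lvert u_m\rvert^2$ is \emph{intrinsic to $W$} — it does not depend on which orthonormal basis of $W$ we picked — and then to push it around by $\aut(G)$ and invoke vertex-transitivity. The value of the constant will then fall out by summing $f$ against the weight $d$.

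First I would record the basis-independence. If $\{v_\alpha\}_{\alpha=1}^{m}$ is another orthonormal basis of $W$, write $v_\alpha=\sum_{\beta}O_{\alpha\beta}u_\beta$; since both bases are orthonormal with respect to $\langle\cdot,\cdot\rangle$, the matrix $O=(O_{\alpha\beta})$ is orthogonal, so for every $x\in V$
\[
\sum_{\alpha}\lvert v_\alpha(x)\rvert^2=\sum_{\beta,\beta'}\Bigl(\sum_{\alpha}O_{\alpha\beta}O_{\alpha\beta'}\Bigr)u_\beta(x)u_{\beta'}(x)=\sum_{\beta}\lvert u_\beta(x)\rvert^2 ,
\]
which shows $f$ depends only on $W$. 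Next, for a fixed $\sigma\in\aut(G)$ I would check that $\{\sigma u_\alpha\}_{\alpha=1}^{m}$ is again an orthonormal basis of $W$: it lies in $W$ because $W$ is invariant, it is a basis because $u\mapsto\sigma u$ is a linear bijection of $C(V)$, and orthonormality follows because $G$ is regular (vertex-transitive graphs are regular) and $\sigma$ permutes $V$, so
\[
\langle\sigma u_\alpha,\sigma u_\beta\rangle=\sum_{x\in V}u_\alpha(\sigma x)u_\beta(\sigma x)d=\sum_{y\in V}u_\alpha(y)u_\beta(y)d=\delta_{\alpha\beta}.
\]
Combining this with the basis-independence gives $f(\sigma x)=\sum_\alpha\lvert(\sigma u_\alpha)(x)\rvert^2=f(x)$ for all $x\in V$, and since $\aut(G)$ acts transitively on $V$, the function $f$ is constant, say $f\equiv c$.

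Finally I would evaluate $c$ by summing: using orthonormality,
\[
c\,d\,\#V=\sum_{x\in V}f(x)\,d=\sum_{\alpha=1}^{m}\sum_{x\in V}\lvert u_\alpha(x)\rvert^2 d=\sum_{\alpha=1}^{m}\langle u_\alpha,u_\alpha\rangle=m ,
\]
hence $c=m/(d\#V)$. I do not anticipate a genuine obstacle here: the only point that needs care is Step~1, the basis-independence of $f$ — it is precisely this fact that lets the $\aut(G)$-averaging argument go through cleanly, without having to produce an orthonormal basis of $W$ adapted to the automorphism group.
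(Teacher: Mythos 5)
Your proposal is correct and follows essentially the same route as the paper's proof: use basis-independence of $\sum_\alpha\lvert u_\alpha\rvert^2$ together with the fact that $\{\gamma u_\alpha\}$ is again an orthonormal basis of $W$ to get invariance under $\aut(G)$, conclude constancy from vertex-transitivity, and compute the constant by summing against the weight $d$. You merely spell out the details (the orthogonal change-of-basis matrix and why automorphisms preserve the inner product) that the paper leaves implicit.
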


\begin{proof}
Put $f(x):=\lvert u_1(x)\rvert^2+\dots+\lvert u_m(x)\rvert^2$. By the invariance of $W$, the family $\{\gamma u_{\alpha}\}_{\alpha=1}^m$ is also an orthonormal basis of $W$ for any $\gamma\in \aut (G)$.
For fixed $x\in V$, it is easy to see that the sum $\lvert u_1(x)\rvert^2+\dots+\lvert u_m(x)\rvert^2$ is independent of the choice of an orthonormal basis $\{u_{\alpha}\}$. Thus,
\[
f(\gamma x)=\sum_{\alpha=1}^m\lvert \gamma u_{\alpha}(x)\rvert^2 = \sum_{\alpha=1}^m\lvert u_{\alpha}(x)\rvert^2=f(x).
\]
The transitivity of the action of $\aut(G)$ yields that $f$ is constant.
Let $C$ be the value of $\lvert u_1(x)\rvert^2+\dots+\lvert u_m(x)\rvert^2$.
By multiplying $d$ and summing over $x\in V$, we have
\[
Cd\#V=\sum_{\alpha=1}^m\sum_{x\in V}\lvert u_{\alpha}(x)\rvert^2d=m.
\]
\end{proof}

\begin{lem}\label{lem: const2}
Let $G$ be a symmetric graph.
Let $\lambda$ be an eigenvalue of $\Delta$ and let $\{u_{\alpha}\}_{\alpha=1}^m$ be an orthonormal basis of $W_{\lambda}$. Then, the function $g(x,y):=\sum_{\alpha=1}^m\lvert\nabla_{xy}u_{\alpha}\rvert^2, x\sim y$, is constant
and its value is $m\lambda/\#E$.
\end{lem}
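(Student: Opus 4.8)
The plan is to show that $g$ is constant by combining the $\aut(G)$-invariance of the eigenspace $W_\lambda$ with the symmetry of $G$, and then to identify the constant by summing $g$ over all edges.

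First I would record the relevant invariance. For $\gamma\in\aut(G)$ and $u\in C(V)$, since $\gamma$ preserves adjacency one checks directly that $\Delta(\gamma u)=\gamma(\Delta u)$, so $\gamma(W_\lambda)\subseteq W_\lambda$; and since $\gamma$ merely permutes $V$, one has $\langle\gamma u,\gamma v\rangle=\langle u,v\rangle$. Hence, if $\{u_\alpha\}_{\alpha=1}^m$ is an orthonormal basis of $W_\lambda$, then so is $\{\gamma u_\alpha\}_{\alpha=1}^m$ for every $\gamma\in\aut(G)$. Next, exactly as in the proof of Lemma \ref{lem: const1}, for fixed adjacent $x,y$ the value $\sum_{\alpha=1}^m|\nabla_{xy}u_\alpha|^2=\sum_{\alpha=1}^m|u_\alpha(y)-u_\alpha(x)|^2$ does not depend on the chosen orthonormal basis of $W_\lambda$ (equivalently, it is the squared norm of the Riesz representative in $W_\lambda$ of the linear functional $u\mapsto\nabla_{xy}u$).

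Combining these two facts, for every $\gamma\in\aut(G)$ and every $x\sim y$,
\[
g(\gamma x,\gamma y)=\sum_{\alpha=1}^m|u_\alpha(\gamma y)-u_\alpha(\gamma x)|^2=\sum_{\alpha=1}^m|\nabla_{xy}(\gamma u_\alpha)|^2=g(x,y),
\]
where the last equality uses that $\{\gamma u_\alpha\}$ is another orthonormal basis of $W_\lambda$. Since $G$ is symmetric, $\aut(G)$ acts transitively on the set of ordered pairs of adjacent vertices, so $g$ takes a single value $C$ on all of $\{(x,y):x\sim y\}$. To compute $C$, I would sum over the unordered edges and use Lemma \ref{lem: calculus}(1) together with $\Gamma(u,u)(x)=\frac1{2d}\sum_{y\sim x}(\nabla_{xy}u)^2$:
\begin{align*}
C\cdot\#E &= \sum_{x\sim y}g(x,y)=\sum_{\alpha=1}^m\sum_{x\sim y}(\nabla_{xy}u_\alpha)^2 \\
&=\sum_{\alpha=1}^m\sum_V\Gamma(u_\alpha,u_\alpha)d=\sum_{\alpha=1}^m\bigl(-\langle u_\alpha,\Delta u_\alpha\rangle\bigr)=\sum_{\alpha=1}^m\lambda\|u_\alpha\|^2=m\lambda,
\end{align*}
so $C=m\lambda/\#E$.

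I do not anticipate a serious obstacle: the only points requiring a little care are the basis-independence used above (the same observation already invoked in Lemma \ref{lem: const1}, now applied to the functional $\nabla_{xy}$ rather than to point evaluation) and keeping track, in the final computation, that $\sum_{x\sim y}$ runs over unordered edges when $\sum_V\Gamma(u_\alpha,u_\alpha)d$ is rewritten as an edge sum. The remaining steps are routine manipulations with the automorphism action.
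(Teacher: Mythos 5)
Your proof is correct and follows essentially the same route as the paper: use the basis-independence of $\sum_{\alpha}\lvert\nabla_{xy}u_{\alpha}\rvert^2$ together with the invariance of $W_{\lambda}$ and arc-transitivity to get constancy, then sum over edges (via $\sum_{x\sim y}(\nabla_{xy}u_\alpha)^2=-\langle u_\alpha,\Delta u_\alpha\rangle=\lambda$) to identify the constant $m\lambda/\#E$. The only difference is cosmetic bookkeeping (you sum over unordered edges and cite Lemma \ref{lem: calculus}(1) explicitly, while the paper sums over ordered pairs), which does not change the argument.
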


\begin{proof}
Since $W_{\lambda}$ is an invariant vector subspace of $C(V)$, the family $\{\gamma u_{\alpha}\}_{\alpha=1}^m$ is also an orthonormal basis of $W_{\lambda}$ for any $\gamma\in \aut(G)$.
Since the sum $\sum_{\alpha=1}^m\lvert\nabla_{xy}u_{\alpha}\rvert^2$ is independent of the choice of an orthonormal basis $\{u_{\alpha}\}$, we have
\[
g(\gamma x,\gamma y)=\sum_{\alpha=1}^m\lvert\nabla_{xy}(\gamma u_{\alpha})\rvert^2=\sum_{\alpha=1}^m\lvert\nabla_{xy}u_{\alpha}\rvert^2 = g(x,y).
\]
The symmetry of $G$ yields that $g$ is constant. Let $C'$ be the value of $g$. By summing over $x\sim y$, we have
\[
2C'\#E=\sum_{\alpha=1}^m\sum_{x\sim y}\lvert \nabla_{xy}u_{\alpha}\rvert^2 = 2\lambda m.
\]
\end{proof}

\begin{cor}\label{cor: const3}
Let $G$ be a symmetric graph.
Let $\lambda$ and $\{u_{\alpha}\}$ be as in Lemma \ref{lem: const2}.
Then, the function $f_3(x,y)=\sum_{\alpha=1}^mu_{\alpha}(x)\nabla_{xy}u_{\alpha}$ is constant and its value is
$-\lambda m/2\#E$.
\end{cor}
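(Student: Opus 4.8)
The plan is not to re-run the automorphism argument of the previous two lemmas, but to obtain $f_3$ algebraically from the two constant functions already produced. First I would expand the square in the definition of the function $g$ of Lemma \ref{lem: const2}: for $x\sim y$,
\[
g(x,y)=\sum_{\alpha=1}^{m}\bigl(u_{\alpha}(y)-u_{\alpha}(x)\bigr)^{2}=\sum_{\alpha=1}^{m}u_{\alpha}(y)^{2}-2\sum_{\alpha=1}^{m}u_{\alpha}(x)u_{\alpha}(y)+\sum_{\alpha=1}^{m}u_{\alpha}(x)^{2}.
\]
Since $W_{\lambda}$ is an invariant subspace, Lemma \ref{lem: const1} applies and gives $\sum_{\alpha}u_{\alpha}(x)^{2}=\sum_{\alpha}u_{\alpha}(y)^{2}=m/(d\#V)$; hence
\[
\sum_{\alpha=1}^{m}u_{\alpha}(x)u_{\alpha}(y)=\frac{m}{d\#V}-\frac12\, g(x,y).
\]

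Next, writing $\nabla_{xy}u_{\alpha}=u_{\alpha}(y)-u_{\alpha}(x)$, I would compute
\[
f_{3}(x,y)=\sum_{\alpha=1}^{m}u_{\alpha}(x)\nabla_{xy}u_{\alpha}=\sum_{\alpha=1}^{m}u_{\alpha}(x)u_{\alpha}(y)-\sum_{\alpha=1}^{m}u_{\alpha}(x)^{2}=\Bigl(\frac{m}{d\#V}-\frac12 g(x,y)\Bigr)-\frac{m}{d\#V}=-\frac12 g(x,y).
\]
By Lemma \ref{lem: const2} the function $g$ is constant with value $m\lambda/\#E$, so $f_{3}$ is constant with value $-m\lambda/(2\#E)$, which is exactly the assertion.

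There is essentially no obstacle here: every ingredient is already in place, and the only care needed is the bookkeeping of the two normalization constants $m/(d\#V)$ and $m\lambda/\#E$ coming from Lemmas \ref{lem: const1} and \ref{lem: const2}. If a self-contained derivation in the style of those two lemmas were preferred, one could instead check that the value $\sum_{\alpha}u_{\alpha}(x)\nabla_{xy}u_{\alpha}$ is unchanged under an orthogonal change of orthonormal basis of $W_{\lambda}$, deduce $f_{3}(\gamma x,\gamma y)=f_{3}(x,y)$ for every $\gamma\in\aut(G)$, conclude that $f_{3}$ is constant by the symmetry of $G$, and finally pin down the constant by summing over adjacent pairs and invoking $\sum_{\alpha}\langle u_{\alpha},\Delta u_{\alpha}\rangle=-\lambda m$. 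I would present the first, shorter route as the proof.
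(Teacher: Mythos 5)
Your proof is correct and follows essentially the same route as the paper: the paper likewise obtains the constancy of $f_3$ directly from Lemmas \ref{lem: const1} and \ref{lem: const2}, which is exactly the algebraic identity $f_3=-\tfrac12 g$ that you write out explicitly. The only cosmetic difference is in pinning down the constant: you read it off from the known value $g\equiv m\lambda/\#E$, whereas the paper sums $f_3$ over adjacent pairs and antisymmetrizes; both steps are immediate.
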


\begin{proof}
The constancy of $f_3$ immediately follows from Lemma \ref{lem: const1} and Lemma \ref{lem: const2}.
Let $C_3$ be the value of $f_3$. By summing over $x\sim y$, we have
\[
2C_3\#E=\sum_{\alpha=1}^m\sum_{x\sim y}u_{\alpha}(x)\nabla_{xy}u_{\alpha}.
\]
By interchanging $x$ and $y$,
\[
\sum_{x\sim y}u_{\alpha}(x)\nabla_{xy}u_{\alpha} = -\sum_{x\sim y} u_{\alpha}(y)\nabla_{xy}u_{\alpha}.
\]
Thus, we obtain
\[
2C_3\#E = \frac12 \sum_{\alpha=1}^m\sum_{x\sim y}(u_{\alpha}(x)-u_{\alpha}(y))\nabla_{xy}u_{\alpha} =-\lambda m.
\]
\end{proof}
\subsection{Proof of main theorem}
We prove Theorem \ref{main thm}, Theorem \ref{main thm 2} and Corollary \ref{main cor}.
First, we prove Theorem \ref{main thm}.
\begin{proof}[Proof of Theorem 1.1]
Let $\{u_{\alpha}\}_{\alpha}$ be an orthonormal basis of $E_{\mu}$. Then, we have
\begin{align}\label{term: lhs}
\nonumber \sum_{\alpha=1}^m\sum_{x\sim y}u_i(x)u_i(y)\lvert\nabla_{xy}u_{\alpha}\rvert^2 &=
\frac{\lambda m}{\#E}\sum_{x\sim y}u_i(x)u_i(y) \\
\nonumber&=\frac{\lambda m}{\#E}\sum_{x\in V}u_{i}(x)d\cdot \frac1d\sum_{y\sim x}u_i(y) \\
\nonumber&=\frac{\lambda m}{\#E}(1-\lambda_i)\sum_{x\in V}u_i(x)^2d \\
&=\frac{\lambda m}{\#E}(1-\lambda_i).
\end{align}
Next, we evaluate $\sum_{\alpha}\|2\Gamma (u_{\alpha},  u_i) + u_i\Delta u_{\alpha}\|^2$.
By Jensen's inequality, we have
\[
4\Gamma(u_i,u_{\alpha})(x)^2 = \left(\frac{1}{d}\sum_{y\sim x}(\nabla_{xy}u_i)(\nabla_{xy}u_{\alpha})\right)^2
\leq \frac{1}{d}\sum_{y\sim x}(\nabla_{xy}u_i)^2(\nabla_{xy}u_{\alpha})^2,
\]
which yields
\begin{equation}\label{term: rhs1}
4\sum_{\alpha=1}^m\sum_{x\in V}\Gamma(u_i,u_{\alpha})(x)^2d \leq \frac{2\lambda\lambda_i m}{\#E}.
\end{equation}
By Lemma \ref{lem: const1}, we have
\begin{equation}\label{term: rhs2}
\sum_{\alpha=1}^m\sum_{x\in V}(u_i(x)\Delta u_{\alpha}(x))^2d=\frac{\lambda^2 m}{2\#E}.
\end{equation}
By Lemma \ref{cor: const3},
\begin{align}\label{term: rhs3}
\nonumber -4\lambda\sum_{\alpha=1}^m\sum_{x\in V}u_i(x)u_{\alpha}(x)\Gamma(u_{\alpha},u_i)(x)d &=
\frac{\lambda^2 m}{\#E}\sum_{x\in V}u_i(x)\sum_{y\sim x}\nabla_{xy}u_i \\ 
&= -\frac{\lambda^2\lambda_im}{\#E}.
\end{align}
By letting $h=u_{\alpha}$ in Lemma \ref{CYineq}, summing over $\alpha=1,\dots,m$ and taking account into \eqref{term: lhs}, \eqref{term: rhs1}, \eqref{term: rhs2} and \eqref{term: rhs3}, we obtain
\[
\sum_{i=0}^k(\lambda_{k+1}-\lambda_i)^2(1-\lambda_i)\leq
\sum_{i=0}^k(\lambda_{k+1}-\lambda_i)(2(2-\lambda)\lambda_i+\lambda).
\]
\end{proof}
In order to prove Theorem \ref{main thm 2}, we need some lemmas.
\begin{lem}[Chebyshev's sum inequality]\label{Chebyineq}
Let $N\geq 1$ be an integer and $\{a_i\}_{i=1}^{N}, \{b_i\}_{i=1}^{N}$ two sequences of real numbers.
If both of $\{a_i\}_{i=1}^{N}, \{b_i\}_{i=1}^{N}$ are non-increasing, then
\[
\frac1N\sum_{i=1}^Na_ib_i \geq \left(\frac1N\sum_{i=1}^Na_i\right)\left(\frac1N\sum_{i=1}^Nb_i\right).
\]
\end{lem}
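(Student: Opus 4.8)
The plan is to establish Chebyshev's sum inequality by the classical symmetrization trick: introduce the double sum
\[
S:=\sum_{i,j=1}^{N}(a_i-a_j)(b_i-b_j)
\]
and evaluate it in two different ways.

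First I would check that $S\geq 0$ by a term-by-term sign analysis. Fix a pair $(i,j)$. If $i\leq j$, then $a_i\geq a_j$ and $b_i\geq b_j$ because both sequences are non-increasing, so $(a_i-a_j)(b_i-b_j)\geq 0$; if $i>j$, both factors change sign and the product is again $\geq 0$. Hence every summand is non-negative, so $S\geq 0$.

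Second, I would expand $(a_i-a_j)(b_i-b_j)=a_ib_i+a_jb_j-a_ib_j-a_jb_i$ and sum over all $i,j$ from $1$ to $N$. Using $\sum_{i,j}a_ib_i=\sum_{i,j}a_jb_j=N\sum_{i=1}^{N}a_ib_i$ together with $\sum_{i,j}a_ib_j=\sum_{i,j}a_jb_i=\bigl(\sum_{i=1}^{N}a_i\bigr)\bigl(\sum_{i=1}^{N}b_i\bigr)$, one obtains
\[
S=2N\sum_{i=1}^{N}a_ib_i-2\Bigl(\sum_{i=1}^{N}a_i\Bigr)\Bigl(\sum_{i=1}^{N}b_i\Bigr).
\]
Combining this identity with $S\geq 0$ and dividing through by $2N^{2}$ gives the asserted inequality.

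I do not expect any genuine obstacle here: the only place needing (minimal) care is the sign discussion in the first step, which is immediate from monotonicity, while the index bookkeeping in the expansion is routine. An alternative route would be to average the rearrangement inequality $\sum_i a_ib_i\geq\sum_i a_ib_{i+\ell}$ (indices modulo $N$) over the $N$ cyclic shifts $\ell=0,\dots,N-1$ of the sequence $\{b_i\}$, but the double-sum computation above is entirely self-contained and shorter, so that is the version I would write up.
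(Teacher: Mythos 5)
Your symmetrization argument is correct and complete: every term $(a_i-a_j)(b_i-b_j)$ is non-negative because the two sequences are ordered the same way, and the expansion of the double sum yields exactly $2N\sum_i a_ib_i-2\bigl(\sum_i a_i\bigr)\bigl(\sum_i b_i\bigr)$, so dividing by $2N^2$ gives the claim. The paper itself states this classical inequality without proof, so there is no in-paper argument to compare against; your write-up is the standard proof and fills that gap correctly.
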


\begin{lem}\label{lem: admatrix}
For any $0\leq k\leq N-1$,
\[
\sum_{i=0}^k(1-\lambda_i)\geq 0
\]
and the equality holds if and only if $k=N-1$.
\end{lem}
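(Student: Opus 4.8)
The plan is to rephrase the statement in terms of the non-increasing sequence $a_i := 1-\lambda_i$, $i=0,\dots,N-1$. Since $G$ is $d$-regular, the paper has already identified $\Delta$ with the real symmetric matrix $D^{-1}A-I=\frac1d A-I$, where $A$ is the adjacency matrix; hence $\lambda_0,\dots,\lambda_{N-1}$ are the eigenvalues of $-\Delta=I-\frac1d A$, and $a_0,\dots,a_{N-1}$ are the eigenvalues of $\frac1d A$. Because $G$ is simple, $A$ has zero diagonal, so
\[
\sum_{i=0}^{N-1} a_i = \mathrm{tr}\!\left(\tfrac1d A\right) = 0 .
\]
This proves the identity $\sum_{i=0}^{N-1}(1-\lambda_i)=0$, i.e.\ the case $k=N-1$ of the lemma, with equality.

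Next I would show that the inequality is strict for every $0\le k\le N-2$, arguing by contradiction. Suppose $S_k:=\sum_{i=0}^{k}a_i\le 0$. As $a_0\ge a_1\ge\cdots\ge a_{N-1}$, we have $a_{k+1}\le a_i$ for $0\le i\le k$, so $(k+1)a_{k+1}\le S_k\le 0$, which forces $a_j\le a_{k+1}\le 0$ for all $j\ge k+1$. Combining with $\sum_{i=0}^{N-1}a_i=0$ gives $0 = S_k + \sum_{j=k+1}^{N-1}a_j \le \sum_{j=k+1}^{N-1}a_j \le 0$, so $S_k=0$ and $a_{k+1}=\dots=a_{N-1}=0$. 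But then $a_1,\dots,a_k\ge a_{k+1}=0$, while $a_0 = 1-\lambda_0 = 1$, so $S_k\ge a_0 = 1>0$, a contradiction. Hence $S_k>0$ for $k\le N-2$, and together with the previous paragraph this is exactly the assertion of the lemma.

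I do not expect a real obstacle here: everything is elementary once the trace identity $\sum_i(1-\lambda_i)=\mathrm{tr}(\frac1d A)=0$ is observed. The only mildly delicate point is the equality case, where one must exclude a proper tail $\lambda_{k+1}=\dots=\lambda_{N-1}=1$ of the spectrum; the final step of the previous paragraph does precisely this, using only $\lambda_0=0$ and the monotonicity of the $\lambda_i$. As an alternative to the contradiction argument, the bound $\sum_{i=0}^{k}(1-\lambda_i)\ge 0$ also follows from Ky Fan's variational principle for the symmetric matrix $\frac1d A$ applied to the orthonormal trial vectors $\delta_x/\sqrt d$ ($x\in V$), for which $\langle \frac1d A\,\delta_x,\delta_x\rangle=A_{xx}=0$; tracking the equality case of Ky Fan's principle, together with the fact that a connected graph always has an edge between a proper nonempty vertex subset and its complement, recovers the characterization $k=N-1$.
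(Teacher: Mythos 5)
Your proof is correct and follows essentially the same route as the paper: both reduce the statement to the fact that the adjacency matrix $A$ has zero trace, so that the partial sums of the non-increasing sequence of eigenvalues of $\tfrac1d A$ are non-negative. In fact your write-up is more complete than the paper's, which simply asserts that $\sum_{i=0}^k\nu_i\geq 0$ with equality only at $k=N-1$; your contradiction argument (using $a_0=1-\lambda_0=1>0$ to rule out a vanishing partial sum for $k\leq N-2$) supplies exactly the justification the paper leaves implicit.
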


\begin{proof}
Let $A$ be the adjacency matrix of $G$ and $\nu_0\geq \nu_1\geq \dots\geq \nu_{N-1}$ be all eigenvalues of $A$.
Since any diagonal entry of $A$ is $0$, $\sum_{i=0}^{N-1}\nu_i$ is also $0$ and $\sum_{i=0}^k\nu_i\geq 0$ for any $k$,
with the equality holds if and only if $k=N-1$.
By the relation between $\Delta$ and $A$, we have
\[
\sum_{i=0}^k(1-\lambda_i) = \frac1d\sum_{i=0}^k\nu_i \geq 0
\]
and equality holds if and only if $k=N-1$.
\end{proof}
Next, we prove Theorem \ref{main thm 2} and Corollary \ref{main cor}.
\begin{proof}[Proof of Theorem 1.2]
By letting $\lambda=\lambda_1$ in Theorem \ref{main thm}, we have
\[
\sum_{i=0}^k(\lambda_{k+1}-\lambda_i)(\lambda_i^2-(\lambda_{k+1}-2\lambda_1+5)\lambda_i +\lambda_{k+1}-\lambda)\leq 0.
\]
Clearly, $\lambda_{k+1}-\lambda_i$ is non-increasing in $i$.
Put $f(x):=x^2-(\lambda_{k+1}-2\lambda_1+5)x$. Then, the function $f$ is non-increasing in the interval $(-\infty,(\lambda_{k+1}-2\lambda_1+5)/2]$. From Lemma \ref{lem: upperbound}, $(\lambda_{k+1}-2\lambda_1+5)/2\geq 2$.
Since $0\leq \lambda_i \leq 2$, $\lambda_i^2-(\lambda_{k+1}-2\lambda_1+5)\lambda_i +\lambda_{k+1}-\lambda$ is non-increasing in $i$. We may use Lemma \ref{Chebyineq} and thus
\[
\left(\lambda_{k+1}-\sum_{i=0}^k\frac{\lambda_i}{k+1}\right)\left(\sum_{i=0}^k\frac{(1-\lambda_i)\lambda_{k+1}+\lambda_i^2 -(5-2\lambda_1)\lambda_i}{k+1}-\lambda_1\right)\leq 0.
\]
If $k\geq m(\lambda_1)$, then $\lambda_{k+1}-\sum_{i=0}^k\lambda_i/(k+1)$ is strictly positive. In this case, we have
\[
\frac{1}{k+1}\sum_{i=0}^k((1-\lambda_i)\lambda_{k+1}+\lambda_i^2 -(5-2\lambda_1)\lambda_i-\lambda_1)\leq 0.
\]
By Lemma \ref{lem: admatrix}, we obtain
\[
\lambda_{k+1}\leq \frac{(k+1)\lambda_1+\sum_{i=1}^k((5-2\lambda_1)\lambda_i-\lambda_i^2)}{\sum_{i=0}^k(1-\lambda_i)}.
\]
This inequality also holds for $k<m(\lambda_1)$.
\end{proof}
\begin{proof}[Proof of Corollary 1.3]
If $k=m(\lambda_1)$, then $\lambda_{k+1}=\mu_2$ and $\lambda_1=\cdots=\lambda_{k-1}=\mu_1$. By Theorem \ref{main thm 2},  we have
\[
\frac{\mu_2}{\mu_1}\leq \frac{6m+1-3m\mu_1}{m+1-m\mu_1}.
\]
Let $g(x):=(6m+1-3mx)/(m+1-mx)$. The function $g$ is increasing. By Lemma \ref{lem: upperbound},
\[
\frac{\mu_2}{\mu_1}\leq g(1)=3m+1.
\]
\end{proof}

\section{On the non-triviality of Corollary 1.3}
In this section, we consider symmetric graphs, other than complete graphs.
Let $\mu_1$ and $\mu_2$ be the first and the second smallest positive eigenvalue, respectively.
If $(3m+1)\mu_1$ is not less than $2$, then the inequality in Corollary \ref{main cor} is trivial since $\mu_2\leq 2$ always holds.
In this section, we see that there exist infinitely many graphs such that $(3m+1)\mu_1$ is strictly less than $2$.

Let $C_{N}$, $N\geq 3$, be the cycle graph with $N$ vertices.
Cycle graphs are symmetric.
The spectra of cycle graphs are well-known. 
\begin{lem}
The smallest positive eigenvalue of the normalized Laplace operator associated with $C_{N}$ is
$1-\cos (2\pi/N)$ and its multiplicity is $2$.
\end{lem}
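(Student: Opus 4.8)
The plan is to diagonalise the normalized Laplacian of $C_N$ explicitly by the discrete Fourier transform. First I would identify the vertex set of $C_N$ with the cyclic group $\mathbb{Z}/N\mathbb{Z}$, so that $x\sim y$ if and only if $y-x\equiv \pm 1 \pmod N$. Since $C_N$ is $2$-regular, the operator acts by $\Delta u(x)=\tfrac12\bigl(u(x+1)+u(x-1)\bigr)-u(x)$. Put $\zeta:=e^{2\pi i/N}$ and $u_j(x):=\zeta^{jx}$ for $j=0,\dots,N-1$. A one-line computation gives $\Delta u_j(x)=\zeta^{jx}\bigl(\tfrac12(\zeta^{j}+\zeta^{-j})-1\bigr)=-\bigl(1-\cos(2\pi j/N)\bigr)u_j(x)$, so each $u_j$ is an eigenfunction with eigenvalue $\lambda^{(j)}:=1-\cos(2\pi j/N)$. (If one prefers working over $\mathbb{R}$, the real and imaginary parts $\cos(2\pi jx/N)$ and $\sin(2\pi jx/N)$ serve equally well.)

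Next I would check that these account for the whole spectrum. The functions $\{u_j\}_{j=0}^{N-1}$ are the characters of $\mathbb{Z}/N\mathbb{Z}$, hence pairwise orthogonal for $\langle\cdot,\cdot\rangle$ and in particular linearly independent, so they form a basis of $C(V)$ consisting of eigenfunctions. Therefore the multiset $\{\lambda^{(j)}\}_{j=0}^{N-1}$ is exactly the list of eigenvalues counted with multiplicity, and the multiplicity of a value $\lambda$ equals $\#\{\,j\in\{0,\dots,N-1\}\mid 1-\cos(2\pi j/N)=\lambda\,\}$.

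Finally I would pin down the smallest positive eigenvalue. For $j=1,\dots,N-1$ the number $1-\cos(2\pi j/N)$ is positive and is minimized exactly when $\cos(2\pi j/N)$ is maximal. Writing $j':=\min(j,N-j)\in\{1,\dots,\lfloor N/2\rfloor\}$ we have $\cos(2\pi j/N)=\cos(2\pi j'/N)$, and since $\cos$ is strictly decreasing on $[0,\pi]$ this is at most $\cos(2\pi/N)$, with equality only for $j'=1$, i.e.\ $j\equiv\pm1\pmod N$. Hence the smallest positive eigenvalue is $1-\cos(2\pi/N)$, attained precisely at $j=1$ and $j=N-1$, which are distinct because $N\geq 3$; so its multiplicity is $2$. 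Everything here is elementary; the only point needing a little care is this last multiplicity count, namely that $\cos(2\pi j/N)=\cos(2\pi/N)$ forces $j\equiv\pm1\pmod N$ and that $1\not\equiv-1\pmod N$ when $N\geq3$.
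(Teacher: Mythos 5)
Your proof is correct. The paper states this lemma without proof (citing the spectra of cycle graphs as well known), and your argument via the characters $x\mapsto e^{2\pi i jx/N}$ of $\mathbb{Z}/N\mathbb{Z}$ is the standard way to establish it; you also correctly handle the paper's sign convention $\Delta u+\lambda u=0$ and the multiplicity count $j\equiv\pm1\pmod N$ with $1\not\equiv -1$ for $N\geq 3$.
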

Since $1-\cos (2\pi/N)$ is decreasing in $N$ and tends to $0$ as $N\to\infty$,
there exists a number $N_0$ such that $(3m+1)\mu_1=7(1-\cos (2\pi/N))$ is strictly less than $2$ for any $N\geq N_0$. In fact, we can take $N_0=9$.
\bibliographystyle{plain}
\bibliography{reference}
\end{document}